\theoremstyle{plain}
\newtheorem{thm}{Theorem}
\theoremstyle{plain}
\newtheorem{lem}[thm]{Lemma}
\theoremstyle{plain}
\newtheorem{prop}[thm]{Proposition}
\theoremstyle{plain}
\theoremstyle{plain}
\newtheorem{cor}[thm]{Corollary}
\theoremstyle{plain}
\newdimen\algorithmicindent \algorithmicindent=0.5cm
\DeclareMathOperator*{\argmax}{argmax}
\newif\ifnotesw\noteswtrue
\newif\ifnotesw\noteswtrue
\def\blfootnote{\xdef\@thefnmark{}\@footnotetext}
\begin{document}

\title{Optimal recovery of damaged infrastructure networks}

\author{Alexander Gutfraind $^{1}$, Milan Bradonji\'c $^{2}$, and Tim Novikoff $^{3}$}

\maketitle

\begin{abstract}
Natural disasters or attacks may disrupt infrastructure networks on a vast scale.
Parts of the damaged network are interdependent, making it difficult to plan and optimally execute the recovery operations.
To study how interdependencies affect the recovery schedule, 
we introduce a new discrete optimization problem where the goal is to minimize the total cost of installing (or recovering) a given network.
This cost is determined by the structure of the network and the sequence in which the nodes are installed.
Namely, the cost of installing a node is a function of the number of its neighbors that have been installed before it.
We analyze the natural case where the cost function is decreasing and convex, and provide bounds on the cost of the optimal solution. 
We also show that all sequences have the same cost when the cost function is linear
and provide an upper bound on the cost of a random solution for an Erd\H os-R\'enyi random graph.
Examining the computational complexity, we show that the problem is NP-hard when the cost function is arbitrary.
Finally, we provide a formulation as an integer program, an exact dynamic programming algorithm, and a greedy heuristic which gives high quality solutions.

\noindent
\textbf{Keywords}: Infrastructure Networks; Disaster Recovery; Permutation Optimization; Linear Ordering Problem; Neighbor Aided Network Installation Problem.
\end{abstract}

\section{Introduction}
\setcounter{footnote}{3} 
\blfootnote{$^{1}$ University of Texas at Austin, 1 University Station, Austin, TX, 78712, USA, \href{mailto:agutfraind.research@gmail.com}{agutfraind.research@gmail.com}.}
\blfootnote{$^{2}$ Mathematics of Networks and Communications, Bell Laboratories, Alcatel-Lucent, 600 Mountain Avenue, Murray Hill, New Jersey 07974, USA, \href{mailto:milan@research.bell-labs.com}{milan@research.bell-labs.com}. } 
\blfootnote{$^{3}$ Center of Applied Mathematics, Cornell University, Ithaca, New York, 14853, USA, \href{mailto:tnovikoff@gmail.com}{tnovikoff@gmail.com}.}
Many vital infrastructure systems can be represented as networks, including transport, communication and power networks.
Large parts of those networks might become inoperable following natural disasters, such as the Japanese 2011 earthquake near Sendai,
to the extent that the cost of the recovery might be measured in hundreds of billions of dollars.\blfootnote{$^{4}$%
World Bank, East Asia and Pacific Economic Update 2011, \emph{The Recent Earthquake and Tsunami in Japan:
Implications for East Asia}, Washington, DC, March 21, 2011}
Like earthquakes, wars or even terrorist attacks can cause massive network disruption.

When faced with vast devastation, authorities must develop a plan or schedule for restoring the network.
A particularly difficult and costly stage in the recovery is the initial stage because
no infrastructure is available to support recovery operations.
As the recovery progresses, previously installed nodes provide resources or bases that help reduce the cost of rebuilding their neighbors
- a phenomenon that we call neighbor aid.
For simplicity of analysis, we propose that during the reconstruction process all of the nodes and edges of the original network are to be re-installed.
We assume that the cost of the edges is not determined by the schedule of the restoration.
We examine the following question: How could the recovery schedule be optimized in order to reduce the total cost?

The phenomenon of neighbor aid applies to many decision problems beyond disaster recovery.
In fact it originally appeared in educational software design when one of the authors sought the optimal order in which to teach a list of vocabulary words.
A word like \emph{neologism}, for example, would be easier for students to learn if they already knew the etymologically adjacent words \emph{neophyte} and \emph{epilogue}.
This means that the sequence with which words are taught could be optimized to make them easier to learn.

Naturally, the problem we will formulate is related to scheduling problems such as single processor
scheduling \cite{Karp61}, the linear ordering problem \cite{Mitchell96}, the quadratic assignment problem \cite{Pardalos94}
and the traveling salesman problem (TSP) \cite{schrijver2005history}.
Like TSP our problem asks for a schedule based on a graph structure, but unlike with TSP, 
the cost associated with visiting a given node could depend on \emph{all} of the nodes visited before the given node.
Our problem offers a new model for disaster recovery of networks. For other models see e.g. \cite{Guha99,Lee07,Yu03,Hentenryck10}.

\section{General Formulation and Basic Results}
\label{sec:formulation}
In this paper we introduce a description of the neighbor-aid phenomenon as a discrete optimization problem, which we call the \emph{Neighbor Aided Network Installation Problem} (NANIP).  
An instance of NANIP is specified by a network $G=(V,E)$ and a real-valued function $f: \mathbb{N}_0 \to \mathbb{R}_{+}$.
The domain of $f$ are non-negative integers up to the degree of the highest degree node in $G$. 
The goal is to find a permutation of the nodes that minimizes the total cost of the network installation.
The cost of installing node $v_t \in V$ under a permutation $\sigma$ of $V$
is given by 
$$f(r(v_t, G, \sigma))\,,$$ where $r(v_t, G, \sigma)$ is the number of nodes adjacent to $v_t$ in $G$ that appear before $v_t$ in the permutation $\sigma$. The total cost of installing $G$ according to the permutation $\sigma$ is given by
\begin{equation}
C_G(\sigma) = \sum_{t=1}^{n} f(r(v_t, G, \sigma)).\label{eq:general-NANIP}
\end{equation}
The problem is illustrated in Fig.~\ref{fig:illustration}.

NANIP could also be expressed as optimization over $n$-by-$n$ permutation matrices $\Pi$ where 
$A$ is the adjacency matrix of $G$:
\begin{equation}
\min_\Pi \sum_{t=1}^{n} f\left(\sum_{i=1}^{t-1}(\Pi^T A \Pi)_{ti}\right) \,.
\end{equation}
The inner sum computes the number of nodes adjacent to node $t$ that are installed before $t$.
Throughout the paper we use the standard notation $n := |V|$ and $m := |E|$.
We assume that $G$ is connected and undirected, unless we note otherwise.

\begin{figure}[th]
\begin{centering}
\includegraphics[width=0.4\textwidth]{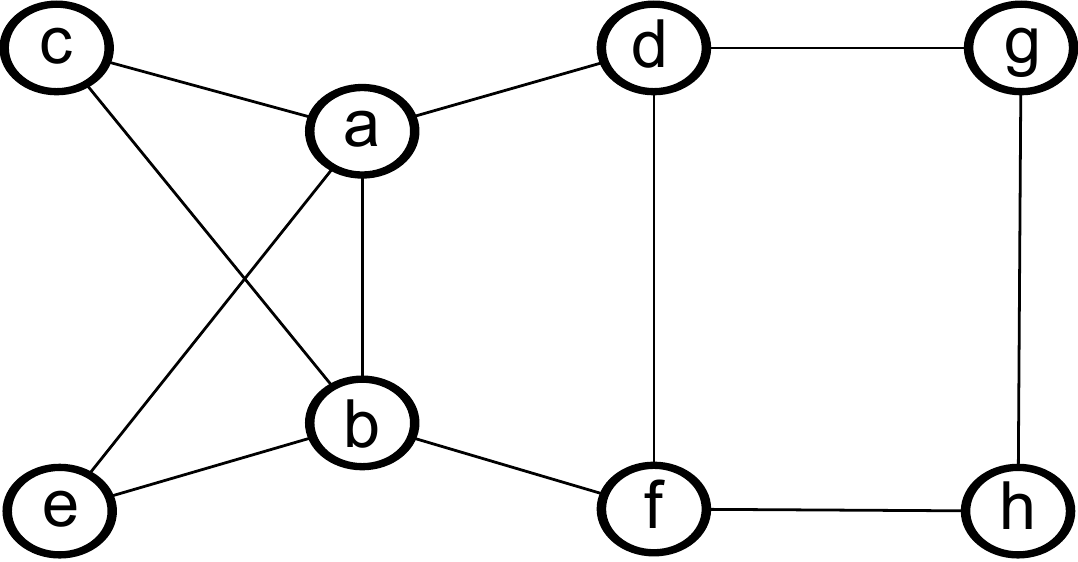} 
\par
\end{centering}
\caption{An illustration of the problem on a simple instance. 
When $f(k)=\frac{12}{1+k}$, nodes $a$ and $b$ are best installed early because they can reduce the
cost of many of their neighbors. An optimal solution is the sequence
$\sigma=(a,b,c,e,d,f,g,h)$ with cost $46=12+6+4+4+6+4+6+4$.\label{fig:illustration}}
\end{figure}

We begin with a preliminary lemma which establishes that all the arguments used in calculating the node costs must sum to $m$, the number of edges in the network.

\begin{lem}\label{lem:edge-decomp}For any network $G$, and any permutation $\sigma$ of the nodes of $G$, 
\begin{equation}
\sum_{t=1}^n r(v_t,G,\sigma) = m \label{eq:edge-decomp}\,.
\end{equation}
\end{lem}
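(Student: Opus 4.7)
The plan is to prove this by an edge-counting argument: I will rewrite the left-hand side as a double sum over (node, edge) pairs and show that each edge contributes exactly $1$.

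First, I would expand each term $r(v_t, G, \sigma)$ as a sum of indicators over $v_t$'s neighbors: specifically,
\[
r(v_t, G, \sigma) = \sum_{u : \{u, v_t\} \in E} \ind\bigl[\sigma^{-1}(u) < \sigma^{-1}(v_t)\bigr],
\]
where $\sigma^{-1}(x)$ denotes the position of $x$ in the sequence $\sigma$. Substituting into the left-hand side of \eqref{eq:edge-decomp} gives a double sum that ranges over all ordered pairs $(v_t, u)$ such that $\{u, v_t\} \in E$ and $u$ precedes $v_t$ in $\sigma$.

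Next I would swap the order of summation and group terms by edge rather than by node. For each undirected edge $e = \{x, y\} \in E$, exactly one of the two orderings $\sigma^{-1}(x) < \sigma^{-1}(y)$ or $\sigma^{-1}(y) < \sigma^{-1}(x)$ holds, since $\sigma$ is a permutation and $x \neq y$. Therefore precisely one of the two indicators $\ind[\sigma^{-1}(x) < \sigma^{-1}(y)]$ and $\ind[\sigma^{-1}(y) < \sigma^{-1}(x)]$ equals $1$, so each edge contributes exactly $1$ to the regrouped sum. Summing over all $m$ edges yields $m$, completing the proof.

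There is no real obstacle here; the only care needed is the bookkeeping when switching from summing over nodes (counting earlier neighbors) to summing over edges (counting the later endpoint). The argument uses only the facts that $G$ is undirected (so each edge has two well-defined endpoints) and that $\sigma$ is a total order on $V$ (so one endpoint of every edge strictly precedes the other).
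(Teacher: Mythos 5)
Your proof is correct and is essentially the same argument as the paper's: both identify each term $r(v_t,G,\sigma)$ with the set of edges whose later endpoint (under $\sigma$) is $v_t$, and observe that every edge is counted exactly once because exactly one of its two endpoints comes later in the permutation. Your version merely formalizes this with indicator functions and an explicit exchange of summation order.
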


\begin{proof}
For every edge in $G$ and any permutation $\sigma$, one of the two endpoints of the edge is the latter of the two to be installed under $\sigma$. By definition, $r(v_t,G,\sigma)$ is the number of edges whose latter endpoint is installed at time $t$. Thus summing up $r(v_t,G,\sigma)$ for all $t=1,2,\dots,n$ accounts for all the edges in $G$.
\end{proof}

One application of this lemma is the case of a linear cost function $f(k)=ak+b$, for some real numbers $a$ and $b$.  
With such a function the optimization problem is trivial in that all installation permutations have the same cost.
\begin{cor}{For any $G$, when $f(k) = ak + b$ for some real numbers $a$ and $b$, then $C_G (\sigma) = am + bn$.}
\end{cor}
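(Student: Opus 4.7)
The plan is to apply Lemma \ref{lem:edge-decomp} directly, after splitting the sum over the two terms of the linear cost function. Because $f(k) = ak + b$, each summand $f(r(v_t, G, \sigma))$ decomposes cleanly into $a \cdot r(v_t, G, \sigma) + b$, and linearity of summation lets me pull the coefficient $a$ out of the sum and collect the $n$ copies of $b$ separately.

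Concretely, I would start from the definition in equation \eqref{eq:general-NANIP}, substitute $f(k) = ak+b$, and write $C_G(\sigma) = a \sum_{t=1}^n r(v_t, G, \sigma) + \sum_{t=1}^n b$. The second sum is just $bn$ since it has $n$ terms each equal to $b$. For the first sum, I apply Lemma \ref{lem:edge-decomp}, which tells me that the sum of the ranks $r(v_t, G, \sigma)$ over all nodes equals $m$, regardless of the choice of $\sigma$. Combining these gives $C_G(\sigma) = am + bn$, as desired. Since neither $m$ nor $n$ depends on $\sigma$, this expression is the same for every permutation, confirming the claim that the optimization is trivial in the linear case.

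There is essentially no obstacle here: the only nontrivial ingredient is the edge-decomposition identity, which has already been established in the preceding lemma. The proof is a one-line calculation once that lemma is invoked, so the main task is simply to present the algebra cleanly and note that the final expression is $\sigma$-independent.
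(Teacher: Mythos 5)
Your proof is correct and follows exactly the paper's argument: substitute $f(k)=ak+b$ into the definition of $C_G(\sigma)$, split the sum by linearity, and invoke Lemma \ref{lem:edge-decomp} to replace $\sum_t r(v_t,G,\sigma)$ with $m$. Nothing is missing and the approach is identical to the one in the paper.
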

\begin{proof}
\begin{equation}
\nonumber
C_G(\sigma) = \sum_{t=1}^n f(r(v_t, G, \sigma)) = \sum_{t=1}^n \Big (a r(v_t, G, \sigma) + b \Big ) = a \sum_{t=1}^n r(v_t, G, \sigma) + bn = am + bn.
\end{equation}
\end{proof}

For some applications, it may be useful to know the cost of a sequence when the sequence is chosen at random.
Our analysis of this case uses a frequently-used model of a random graph, namely the Erd\H os-R\'enyi model~\cite{Erdos60},
where any edge exists independently with probability $p$. 
\begin{lem}
\label{lm:er}
{Let $G_{n,p}$ be an Erd\H os-R\'enyi random graph on $n$ nodes with the edge probability $p$. Let $\sigma$ be a permutation of the nodes of $G_{n,p}$ chosen uniformly at random from the set of such permutations. For any real-valued function $f: \mathbb{N}_0 \to \mathbb{R}_{+}$ and $p > 0$ the expected cost satisfies
$$\mathbb{E}\Big (C_{G_{n,p}}(\sigma) \Big ) \leq \frac{1}{p}\sum_{k=0}^{n-1} f(k).$$}
\end{lem}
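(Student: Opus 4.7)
The plan is to compute the expectation by conditioning on the position $t$ of each node, then swap the order of summation, and finally recognize that the resulting inner sum is a partial sum of a negative binomial distribution, which is bounded above by $1/p$.

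First I would observe that by linearity of expectation,
\[
\mathbb{E}\bigl(C_{G_{n,p}}(\sigma)\bigr) = \sum_{t=1}^{n} \mathbb{E}\bigl(f(r(v_t,G,\sigma))\bigr).
\]
Since the permutation $\sigma$ is uniform and the edges are independently present with probability $p$, conditional on $v_t$ being the $t$-th node installed, the count $r(v_t,G,\sigma)$ equals the number of edges from $v_t$ to the $t-1$ earlier nodes, which is distributed as $\mathrm{Bin}(t-1,p)$. Hence
\[
\mathbb{E}\bigl(f(r(v_t,G,\sigma))\bigr) = \sum_{k=0}^{t-1}\binom{t-1}{k}p^{k}(1-p)^{t-1-k}f(k).
\]

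Next I would interchange the order of summation over $t$ and $k$, obtaining
\[
\mathbb{E}\bigl(C_{G_{n,p}}(\sigma)\bigr) = \sum_{k=0}^{n-1} f(k)\sum_{j=k}^{n-1}\binom{j}{k}p^{k}(1-p)^{j-k},
\]
after reindexing $j=t-1$. The bound now reduces to controlling the inner sum uniformly in $k$.

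The key step is to recognize this inner sum via the negative binomial distribution: the probability mass function of the trial index $J+1$ of the $(k+1)$-th success in i.i.d.\ $\mathrm{Bernoulli}(p)$ trials is $\binom{j}{k}p^{k+1}(1-p)^{j-k}$ for $j\geq k$, and these probabilities sum to $1$. Dividing by $p$ gives the identity
\[
\sum_{j=k}^{\infty}\binom{j}{k}p^{k}(1-p)^{j-k} = \frac{1}{p},
\]
so the truncated sum from $j=k$ to $n-1$ is at most $1/p$. Substituting back yields the desired inequality. The only conceptual obstacle is identifying this negative binomial/geometric tail identity; once it is in hand, the rest is bookkeeping. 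A small sanity check is the limit $p\to 1$, in which case $r(v_t,G,\sigma)=t-1$ deterministically and the cost equals $\sum_{k=0}^{n-1}f(k)$, matching the bound with equality.
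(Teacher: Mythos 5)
Your proposal is correct and follows essentially the same route as the paper: linearity of expectation with $r(v_t,G,\sigma)\sim\mathrm{Bin}(t-1,p)$, interchange of the sums over $t$ and $k$, and bounding the truncated inner sum by the full series. The only cosmetic difference is that you evaluate $\sum_{j\geq k}\binom{j}{k}p^{k}(1-p)^{j-k}=1/p$ via the negative binomial normalization, while the paper uses the equivalent generating-function identity $\sum_{t}\binom{t}{k}x^{t}=x^{k}/(1-x)^{k+1}$.
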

\noindent Notice that the graph does not need to be connected in Lemma~\ref{lm:er}.
\begin{proof}
Let $\sigma = (v_1, \ldots, v_n)$ be a permutation on $n$ nodes chosen uniformly at random from the set of all such permutations, where the label $v_i$ is assigned to the $i$\textsuperscript{th} node to appear in $\sigma$. Then when $v_t$ is installed at time $t$, the probability that exactly $k$ adjacent nodes are already installed is ${t-1 \choose k} p^k(1-p)^{t-1-k}$. Hence the expected cost of installing $v_t$ is
\begin{equation}
\nonumber
\sum_{k=0}^{t-1} {t-1 \choose k} p^k(1-p)^{t-1-k} f(k) \,,
\end{equation}
for $t = 1, \ldots, n$.
Thus, the expected total cost of installing $G$ according to $\sigma$ is 
\begin{eqnarray}
\nonumber
\mathbb{E}\Big (C_{G_{n,p}}(\sigma) \Big ) &=& \sum_{t=1}^n \sum_{k=0}^{t-1} {t-1 \choose k} p^k(1-p)^{t-1-k} f(k) = \sum_{k=0}^{n-1} \sum_{t=k+1}^n  {t-1 \choose k} p^k(1-p)^{t-1-k} f(k) \\
\nonumber
&=& \sum_{k=0}^{n-1} \sum_{q=k}^{n-1}  {q \choose k} p^k(1-p)^{q-k} f(k) = \sum_{k=0}^{n-1} \frac{p^k}{(1-p)^k } f(k) \sum_{q=k}^{n-1}  {q \choose k} (1-p)^{q}.
\end{eqnarray}
Since 
\begin{equation}
\nonumber
\sum_{t=0}^{\infty}  {t \choose k} x^{t} = \frac{x^k}{(1-x)^{k+1}} \,,
\end{equation}
for $x \in [0,1)$ and any positive integer $k$, we have that
\begin{equation}
\nonumber
\sum_{q=k}^{n-1}  {q \choose k} (1-p)^{q} \leq \sum_{q=0}^{\infty}  {q \choose k} (1-p)^{q} = \frac{(1-p)^k}{p^{k+1}} \,.
\end{equation}
Substituting this above and canceling terms we obtain the bound
\begin{equation}
\nonumber
\mathbb{E}\Big (C_{G_{n,p}}(\sigma) \Big ) \leq \frac{1}{p}\sum_{k=0}^{n-1} f(k) \,.
\end{equation}
\end{proof}
An exact expression for $\mathbb{E}\Big (C_{G_{n,p}}(\sigma) \Big )$ could be given using the $_2F_1$ hypergeometric function, where 
$$_2F_1(a,b,c,z) = \sum_{i=0}^\infty \frac{(a)_i(b)_i}{(c)_i} \, \frac {z^i} {i!}\,.$$
Using Mathematica \cite{Mathematica}, we find that the expected total cost of installing $G_{n,p}$ according to $\sigma$ is:
\begin{align*}
\mathbb{E}\Big (C_{G_{n,p}}(\sigma) \Big ) &= \sum_{k=0}^{n-1} \frac{p^k}{(1-p)^k } f(k) \sum_{q=k}^{n-1}  {q \choose k} (1-p)^{q} \\
                                   &= \sum_{k=0}^{n-1} \frac{p^k}{(1-p)^k } f(k) \left[\frac{(1-p)^k}{p^{k+1}} - (1-p)^{n}{n \choose k} {}_{2}F_{1}(1,n+1,n+1-k,1-p)\right]\\
                                   &= \frac{1}{p}\sum_{k=0}^{n-1} f(k) - \sum_{k=0}^{n-1}f(k) {n \choose k} p^k (1-p)^{n-k} {}_{2}F_{1}(1,n+1,n+1-k,1-p) \,.
\end{align*}

\section{Decreasing Convex Cost Function}\label{sec:convex}
The case of a decreasing convex cost function $f$ is natural for many applications of NANIP
because many times the neighbor aid phenomenon declines in significance as the number of neighbors increases.
For a decreasing cost function $f:\mathbb{N}_0 \to \mathbb{R}_{+}$, we say $f$ is {\it decreasing convex} if $f(i) - f(i+1) \geq f(j) - f(j+1)$ for all $j \geq i$ in $\mathbb{N}_0$.
(Notice that a differentiable, convex, non-increasing function $\bar{f}(x): \mathbb{R} \to \mathbb{R}$ satisfies $\bar{f}(x) - \bar{f}(x+1) \geq \bar{f}(y) - \bar{f}(y+1)$ for all $y \geq x$.)

It is illuminating to see the effect of convexity in $f$ for a small graph, such as a $3$-node path graph $V=\{v_1,v_2,v_3\}$ and $E=\{(v_1,v_2),(v_2,v_3)\}$.
A sequence like $(v_1,v_3,v_2)$ has cost $f(0)+f(0)+f(2)$ while a sequence like $(v_1,v_2,v_3)$ costs $f(0)+f(1)+f(1)$.
When $f$ is decreasing convex, the second sequence is at least as cheap as the first one, since $f(1) + f(1) \leq f(0)+f(2)$. 

For decreasing convex cost functions it is possible to find absolute lower bounds on the cost of installing a graph.
To state this bound we extend the domain of $f$ to non-integer values so that it is defined on all reals in $[0,n-1]$.
One approach that preserves convexity is to define $f(q)$ for every positive non-integer $q$ as the linear interpolation of $f(\lfloor q\rfloor)$ and $f(\lceil q\rceil)$.
To derive the bound, we start with the case where only a subgraph $H$ of the network needs to be installed.  
\begin{thm}
\label{thm:cost.bound}
Let $H=(V_{H},E_{H})$ be a non-empty subgraph of $G=(V_{G},E_{G})$. 
Let $\sigma$ be any installation sequence for $G$ such that it first installs $V_{G}\setminus V_{H}$ and then $V_{H}$ . 
Suppose the nodes in $V_{G}\setminus V_{H}$ have already been installed. 
Denote with $E_{GH}$ the cut from $V_{G}\setminus V_{H}$ to $H$: 
\begin{equation}
E_{GH}=\left\{ \{i,j\}\in E_{G} : i\in V_{G}\setminus V_{H}, j\in V_{H}\right\} \,.
\end{equation}

Then for any decreasing convex cost function $f$, the total cost of installing $V_H$ satisfies
\begin{equation}
C_H(\sigma)\geq|V_{H}|f\left(\frac{|E_{H}|+|E_{GH}|}{|V_{H}|}\right).\label{eq:cvx-bd}
\end{equation}
\end{thm}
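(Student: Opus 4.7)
The plan is to prove this by two ingredients: an edge-counting identity restricted to $V_H$, plus Jensen's inequality applied to the convex extension of $f$. The shape of the target inequality $C_H(\sigma)\geq |V_H|\cdot f(\text{average})$ is the telltale signature of Jensen, so once the correct average is identified the argument is essentially forced.

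First I would establish the restricted edge-counting identity
\begin{equation}
\nonumber
\sum_{v_t\in V_H} r(v_t,G,\sigma) \;=\; |E_H|+|E_{GH}|,
\end{equation}
by the same double-counting argument used in Lemma~\ref{lem:edge-decomp}. Because $\sigma$ installs every node of $V_G\setminus V_H$ before any node of $V_H$, each edge of $E_{GH}$ has its later endpoint in $V_H$, and each edge of $E_H$ of course has its later endpoint in $V_H$; conversely, no edge with both endpoints outside $V_H$ contributes. Thus the sum over $v_t\in V_H$ of $r(v_t,G,\sigma)$ counts exactly the edges incident to $V_H$ whose later endpoint lies in $V_H$, which is $|E_H|+|E_{GH}|$.

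Next I would invoke Jensen's inequality. By assumption $f$ has been extended to a convex decreasing function on the real interval $[0,n-1]$ via piecewise linear interpolation, so for the points $r(v_t,G,\sigma)\in\mathbb{N}_0$ with $v_t\in V_H$,
\begin{equation}
\nonumber
\frac{1}{|V_H|}\sum_{v_t\in V_H} f(r(v_t,G,\sigma)) \;\geq\; f\!\left(\frac{1}{|V_H|}\sum_{v_t\in V_H} r(v_t,G,\sigma)\right)
\;=\; f\!\left(\frac{|E_H|+|E_{GH}|}{|V_H|}\right),
\end{equation}
where the last equality uses the identity from the previous step. Multiplying through by $|V_H|$ yields exactly~\eqref{eq:cvx-bd}.

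I do not anticipate a real obstacle here; the only subtle point is justifying that Jensen applies at the (generally non-integer) average, which is why the preceding paragraph of the paper extended $f$ convexly to all reals. The proof needs no property of $\sigma$ beyond the fact that $V_H$ is installed last, and in particular the specific order within $V_H$ is immaterial to the bound.
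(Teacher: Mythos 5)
Your proof is correct and follows essentially the same route as the paper: the paper simply invokes Jensen's inequality with $x_i=r(v_i,G,\sigma)$ over the last $|V_H|$ positions, leaving the edge-counting identity $\sum_{v_t\in V_H} r(v_t,G,\sigma)=|E_H|+|E_{GH}|$ implicit, which you make explicit via the same double-counting argument as Lemma~\ref{lem:edge-decomp}. No substantive difference.
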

The scenario in Theorem~\ref{thm:cost.bound} frequently arises in practical network recovery situations.  
The graph $G$ may represent the total network of a region while the subgraph $H$ represents just the damaged subgraph.  
As a result, the cost of rebuilding $H$ is reduced by connections to existing infrastructure (the edges $E_{GH}$).

\begin{proof}
The argument follows from Jensen's inequality
\begin{equation}
\label{eq:jensens}
f\left(\frac{\sum_{i=A}^B {x_{i}}}{B-A+1}\right) \leq \frac{\sum_{i=A}^B {f(x_{i})}}{B-A+1} \,,
\end{equation}
with $x_{i}=r(v_i, G, \sigma)$,  $A=n+1-|V_{H}|$, and $B = n$.
\end{proof}

\noindent This bound suggests that the best installation sequence is one where the cost
for every node is close to the average cost. 
A special case is where $V_{G}\setminus V_{H}$ is a single node. 
Its installation cost must be $f(0)$ and so we have a lower bound on the cost of $G$.
\begin{cor}
\label{cor.inst.seq}
Let $G=(V,E)$ be any graph and $\sigma$ any installation sequence. 
If $f$ is non-increasing convex then \label{cor:jensen}\[
C_G(\sigma)\geq f(0)+(n-1)f\left(\frac{m}{n-1}\right).\]\label{eq:cvx-bdG}
\end{cor}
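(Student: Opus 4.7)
The plan is to derive the corollary as a direct application of Theorem \ref{thm:cost.bound} with $V_G \setminus V_H$ chosen to be a single node, exactly as the paragraph preceding the corollary hints.

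Given an arbitrary installation sequence $\sigma = (v_1, v_2, \ldots, v_n)$, I would first observe that $v_1$ has no previously installed neighbors, so $r(v_1, G, \sigma) = 0$ and its installation cost is exactly $f(0)$. Next, I would set $V_H = V \setminus \{v_1\}$ and let $H$ be the subgraph of $G$ induced by $V_H$. Since $\sigma$ installs $\{v_1\}$ first and then $V_H$, the hypothesis of Theorem \ref{thm:cost.bound} is satisfied with $V_G \setminus V_H = \{v_1\}$.

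The key combinatorial identity is that $|E_H| + |E_{GH}| = m$: every edge of $G$ either lies inside $H$ (if neither endpoint is $v_1$) or crosses the cut between $\{v_1\}$ and $V_H$ (if $v_1$ is an endpoint), and these two possibilities are disjoint and exhaustive. Plugging this into the bound of Theorem \ref{thm:cost.bound} with $|V_H| = n-1$ yields
\begin{equation*}
C_H(\sigma) \;\geq\; (n-1)\, f\!\left(\frac{m}{n-1}\right).
\end{equation*}
Adding the cost $f(0)$ incurred by $v_1$ gives $C_G(\sigma) \geq f(0) + (n-1) f(m/(n-1))$, which is exactly the claimed bound.

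There is essentially no obstacle here; the only thing worth stating carefully is the edge-counting identity $|E_H| + |E_{GH}| = m$, together with the observation that the bound holds for every $\sigma$ because we are free to take $v_1$ to be whichever node happens to be first in $\sigma$, so the singleton choice $V_G \setminus V_H = \{v_1\}$ always fits the hypothesis of the theorem.
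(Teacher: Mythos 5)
Your proposal is correct and follows exactly the route the paper intends: the paragraph preceding the corollary sketches precisely this argument (take $V_G\setminus V_H$ to be the single first node, which costs $f(0)$, and apply Theorem~\ref{thm:cost.bound} to the remaining $n-1$ nodes with $|E_H|+|E_{GH}|=m$). Your explicit verification of the edge-counting identity and of the hypothesis of the theorem fills in the details the paper leaves implicit.
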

    
An immediate application of this bound is to find the optimal solutions for tree graphs.
On a tree $T$, the bound gives $c(T)\geq f(0) + (n-1) f(1)$.
Consider any algorithm that picks a random starting node and then proceeds to install neighbors of installed nodes
until all the nodes are installed. (An explicit example is stated later in the paper as Algorithm~\ref{al:greedy}.)
Such an algorithm would install $T$ at a cost $f(0) + (n-1) f(1)$, which must be optimal by Corollary~\ref{cor.inst.seq}.

We now proceed to give another bound by considering the degrees of the nodes.
Assume that the nodes are labeled in order of increasing degree in $G$. 
Thus, if $d_i$ gives the degree of node $v_i$, then $d_1 \leq d_2 \leq \cdots \leq d_n$.
We consider the following relaxation of NANIP (\ref{eq:general-NANIP}), 
in which we do not seek an installation sequence from which the values of $r(v_t,G,\sigma)$ are deduced, 
but rather we seek a collection of numbers, $p_1, \ldots , p_n$, 
which satisfy constraints that are satisfied also by $r(v_t,G,\sigma)$ in NANIP.
More precisely for every $i$ we require that $p_i$ is less than or equal to $d_i$, and we require that values of $\sum_{i=1}^n p_i = m$ (compare to Lemma \ref{lem:edge-decomp}). Thus we have the following minimization problem.
\begin{align}
   P = \min & \sum_{i=1}^n f(p_i) \label{eq:relaxationP}\\
    \mbox{s.t.} & \sum_{i=1}^n p_i = m \notag\\
    & 0 \leq p_i \leq d_i \textrm{ and } p_i \in \mathbb{N}_0 \textrm{ for } i=1,2,\dots,n\notag.
\end{align}

\begin{figure}[hbt!]
\begin{centering}
\includegraphics[width=0.5\textwidth]{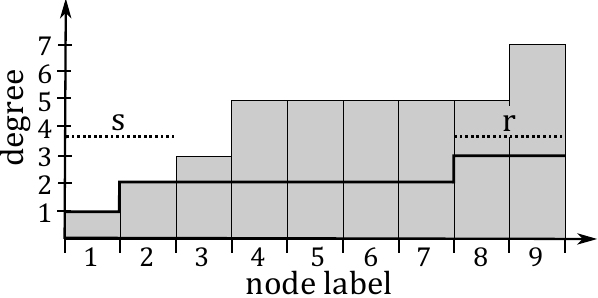} 
\caption{An illustration of the lower bound of (\ref{eq:wf.bound}).
Gray areas indicate the degrees of the nodes $(d_i)$ while the solid line
shows an optimal solution $(p_i)$. Here $s=2$ and $r=2$.\label{fig:wf.bound}}
\end{centering}
\end{figure}

It is easy to find $P$ exactly, giving an analytic lower bound on NANIP with decreasing convex $f$.
Namely, $P$ occurs when the first $s$ of the nodes are installed with $p_i=d_i$ and the rest of the nodes
are installed with $p_i$ equal to $d_s$ or $d_s+1$, as follows.
\begin{thm}\label{thm:wf.bound}
For any $G$ and decreasing convex  $f$ and any $\sigma$ we have the following bound. 
If $d_1 > m/n$, let $s=0$, otherwise let
\begin{equation}
\label{eq:def.s}
s = \max_{k \in \{1,2,\dots, n\}} \left\{ k : (n-k)d_k + \sum_{i=1}^k d_i \leq m\right\}\,.
\end{equation}
When $s=0$, any optimal solution of (\ref{eq:relaxationP}) has $n -(m - n \lfloor \frac{m}{n} \rfloor)$ elements in $(p_i)_{i=1}^n$ equal to $\lfloor \frac{m}{n} \rfloor$, 
and $m - n \lfloor \frac{m}{n} \rfloor$ elements equal to $\lfloor \frac{m}{n} \rfloor +1$;
When $s\geq 1$, let $r = m - (n-s)d_s - \sum_{i=1}^s d_i$, and an optimal solution is given by the following assignment: 
(i) $p_i = d_i$ for $i=1,2,\dots,s$; (ii) for $s<i\leq n$: $p_i = d_s+1$ for exactly $r$ indices and $p_i = d_s$ for exactly $n-s-r$ indices 
(illustrated in Fig.~\ref{fig:wf.bound}). 
Thus the cost of NANIP can be bounded from below by 
\begin{equation}
\label{eq:wf.bound}
C_G(\sigma) \geq P=\begin{cases}
\left(n -(m - n \lfloor \frac{m}{n} \rfloor)\right)f(\lfloor \frac{m}{n} \rfloor) +  \left(m - n \lfloor \frac{m}{n} \rfloor\right)f(\lfloor \frac{m}{n} \rfloor + 1)\,, &\mbox{when } s=0 \,,\\
 (n-r-s)f(d_s) + r f(d_s + 1) + \sum_{i=1}^s f(d_i)\,,  &\mbox{when } s\geq 1\,.
\end{cases}
\end{equation}
\end{thm}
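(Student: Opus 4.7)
The plan is in two stages: first verify $C_G(\sigma) \geq P$ by realizing the $r$-values from any permutation as a feasible solution of (\ref{eq:relaxationP}), then solve the relaxation $P$ exactly by a water-filling argument.

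For the lower bound, let $\pi$ be the permutation that sorts the nodes by increasing degree, and set $p_i := r(v_{\pi(i)}, G, \sigma)$. Since $r(v,G,\sigma) \leq \deg(v)$ for every node $v$, we have $p_i \leq d_i$; Lemma \ref{lem:edge-decomp} gives $\sum_i p_i = m$. Thus $(p_i)$ is feasible for $P$, and $C_G(\sigma) = \sum_i f(p_i) \geq P$.

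For the relaxation, I would argue by a pairwise exchange driven by discrete convexity. If $(q_i)$ is feasible and there exist indices $a, b$ with $q_a \geq q_b + 2$ and $q_b + 1 \leq d_b$, then the swap $(q_a, q_b) \mapsto (q_a - 1, q_b + 1)$ preserves feasibility and, by convexity, satisfies $f(q_a - 1) + f(q_b + 1) \leq f(q_a) + f(q_b)$. Iterating these swaps, while also ensuring that within equal-degree groups the tighter $p_i$'s are assigned to the smaller indices, produces a solution of the form prescribed in the theorem: $p_i = d_i$ for $i$ in an initial segment $\{1, \ldots, s\}$, and the remaining entries lying in $\{d_s, d_s + 1\}$. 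The condition (\ref{eq:def.s}) defining $s$ is precisely that the water level at $d_s$ is feasible while raising it to $d_{s+1}$ would exceed $m$; the residual $r = m - (n-s)d_s - \sum_{i=1}^s d_i$ is distributed one unit at a time among the non-tight slots. Substituting these values into $\sum_i f(p_i^*)$ yields the second branch of (\ref{eq:wf.bound}). The $s = 0$ case, where $d_1 > m/n$ prevents any slot from being tight, is handled analogously with water level $w = m/n$ rounded to $\lfloor m/n \rfloor$ and $\lceil m/n \rceil$.

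The main difficulty will be carefully managing the exchange argument so that both the integrality and the upper-bound constraints $p_i \leq d_i$ are respected at each step, and so that the final structure—the tight set being exactly the smallest-degree initial segment—emerges from iterated two-index swaps. Once the optimal structural form is established, the explicit cost expression (\ref{eq:wf.bound}) follows by direct substitution.
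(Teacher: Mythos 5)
Your proposal takes essentially the same route as the paper: both solve the relaxation (\ref{eq:relaxationP}) by an exchange argument driven by the discrete convexity inequality $f(a)+f(b)\ge f(a+1)+f(b-1)$, repeatedly moving one unit from a high slot to a non-tight lower slot until the water-filling configuration (tight initial segment, remaining values in $\{d_s,d_s+1\}$) is reached, with the counts fixed by $\sum_i p_i=m$. The only difference is that you make explicit the preliminary step $C_G(\sigma)\ge P$ by matching the $r$-values to the sorted degree sequence, which the paper treats as immediate from the way the relaxation is defined.
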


\begin{proof}
Convexity implies that for any integers $a<b$ in $\mathbb{N}_0$, 
\begin{equation}
f(a)+f(b) \geq f(a+1)+f(b-1)
\label{eq:cvx2}\,.
\end{equation}
where the equality holds if and only if $b-a = 1$. 
In the case where $d_1 \leq m/n$, (\ref{eq:def.s}) is well-defined and $s\geq 1$ 
because $n d_1 \leq m$ and $\sum_{i=1}^n d_i = 2m$. 
Suppose that there is an optimal assignment $(q_i)_{i=1}^n \neq (p_i)_{i=1}^n$.
If there is $j \leq s$ such that $q_j < d_j$ then from (\ref{eq:def.s}) must exist $j' > s$ such that $q_j \leq q_{j'} - 2$. 
From (\ref{eq:cvx2}) it follows that by assigning $q_j : = q_j + 1$ and $q_{j'}:=q_{j'} - 1$ 
we can improve the existing solution (or at worst, keep its cost unchanged).
Hence, $q_j = d_j$ for all $j=1,2,\dots,s$. 
Suppose now that there exists a pair of indexes $s < k < k' \leq n$ such that $| q_k - q_{k'} | \geq 2$.
Applying the same transformation on $q_k$ and $q_{k'}$ as in the previous case we can match or improve upon the existing solution. 
Hence for all $s < \ell < \ell' \leq n$ we have $|q_{\ell} - q_{\ell'}| = 1$. 
Finally, from the invariant $\sum_{i=1}^n q_i = m$ it follows that among the numbers $q_{s+1}, q_{s+2}, \dots, q_n$ 
there are $r$ of them equal to $d_s$ and $n-s-r$ of them equal to $d_s +1$, which completes the proof.
The case $s = 0$ is simpler. From $s=0$ it follows that $\lfloor \frac{m}{n} \rfloor \leq d_1 \leq d_2 \leq \cdots \leq d_n$. 
Given a solution $(p_i)_{i=1}^n$, if there were $i,j$ such that $|p_i - p_j| \geq 2$ then we could match or improve $(p_i)_{i=1}^n$ by transformations similar to above. 
\end{proof}
As a corollary, one could show that the same bound applies in the case of non-strictly convex $f$, which might arise in applications.
Given such an $f$, infinitesimally perturb it to obtain $g$ which is strictly convex, 
where $g(d) = f(d) + \epsilon^d$, for some sufficiently small $\epsilon>0$ and $d \in \{0, 1, \dots, n-1\}$.
The cost $C_G(\sigma)$ with $g$ is bounded by (\ref{eq:wf.bound}) and must be greater than the cost $C_G(\sigma)$ with $f$,
but the difference vanishes in the limit $\epsilon\to 0$.

\section{Computation}\label{sec:computation}
We now consider the hardness of solving NANIP and introduce three algorithms for the problem, an Integer Programming Formulation, a Dynamic Programming algorithm, and a fast heuristic. 

\begin{thm}{The Neighbor Aided Network Installation Problem is NP-hard.}
\end{thm}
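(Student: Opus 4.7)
The plan is to reduce from the Maximum Independent Set problem, which is well known to be NP-hard. Given any graph $G=(V,E)$, I would take the NANIP instance to consist of the same graph $G$ together with a simple cost function that penalizes every node having at least one earlier-installed neighbor --- concretely $f(0)=0$ and $f(k)=1$ for all $k\geq 1$ (or, if $\mathbb{R}_+$ is read strictly, $f(0)=1$ and $f(k)=2$, which only shifts the cost by a constant). Under this choice $C_G(\sigma)$ is exactly the number of nodes $v_t$ with $r(v_t,G,\sigma)\geq 1$.

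The key observation is that every permutation $\sigma$ induces an acyclic orientation of $G$ by orienting each edge from its earlier endpoint to its later endpoint, and $r(v_t,G,\sigma)$ is precisely the in-degree of $v_t$ in this orientation; so $r(v_t,G,\sigma)=0$ iff $v_t$ is a source. I would then establish a two-way correspondence: first, if $u$ and $w$ are both sources they cannot be adjacent, since the edge between them would be oriented and give one endpoint positive in-degree, so the source set of any $\sigma$ is always an independent set; second, for any independent set $I\subseteq V$, placing the nodes of $I$ first (in any internal order) and the remaining nodes afterwards yields a $\sigma$ whose source set contains $I$. Hence the maximum number of sources achievable over all $\sigma$ equals the independence number $\alpha(G)$, and $\min_\sigma C_G(\sigma) = n - \alpha(G)$.

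Given a polynomial-time solver for NANIP, one could then recover $\alpha(G)$ via $\alpha(G) = n - \min_\sigma C_G(\sigma)$; equivalently, the decision question ``$\alpha(G)\geq k$?'' becomes ``$\min_\sigma C_G(\sigma)\leq n-k$?''. The reduction is polynomial, uses a trivially describable $f$, and requires no monotonicity or convexity hypothesis --- matching the theorem's stipulation that $f$ is arbitrary. The main (and quite minor) obstacle is being explicit about the source/independent-set correspondence and checking the codomain requirement on $f$; beyond those bookkeeping points the argument is routine.
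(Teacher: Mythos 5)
Your proposal is correct and uses essentially the same reduction as the paper: Max Independent Set with the cost function $f(0)=0$, $f(k)=1$ for $k\geq 1$, so that the minimum cost equals $n-\alpha(G)$. Your write-up is in fact somewhat more careful than the paper's (making explicit the source/independent-set correspondence in both directions), but the underlying argument is identical.
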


\begin{proof}
NP-hardness can be established by reduction from the Max Independent Set problem. Given a graph $G$, we can find a maximal independent set by seeking the optimal solution to the instance of our problem with the same $G$ and the function $f$ defined by $f(0)=0$ and $f(k)=1$ for all $k \not = 0$. Using that $f$, any optimal permutation found by solving our problem will necessarily have the maximum possible number of nodes with cost 0, and the rest with cost 1. The nodes with cost 0 then necessarily form a (maximal) independent set, since if any two were adjacent then the latter of the two to be installed would have incurred a cost of $f(1)=1$.
\end{proof}
Note that while this establishes that general NANIP is NP-hard, it does not apply to the case of a decreasing convex cost function,
which remains an open problem.

\subsection{Integer Programming Formulation\label{sub:IP}}
NANIP with a decreasing convex cost $f$ can be formulated as an integer program in the following way.
For all $i \in V$, let $X_{it}=1$ iff $\sigma(t)=i$, that is, $X_{it} \in \{0,1\}$ denotes that node $i$ is installed at time $t \in \{1,\dots,n\}$.
Observe that if and only if $i$ is installed before $j$ then for some $T\in \{1,2,\dots,n-1\}$, $\sum_{t=1}^T{X_{it}} - \sum_{t=1}^T{X_{jt}} = 1$. 
For each edge $(i,j)$ (considering direction) introduce a weight $E_{ij} \in [0,1]$ so that $E_{ij} + E_{ji} = 1$ for all $i,j\in V$.  
We will ensure that at all optimal solutions, $E_{ij}$ takes integer values.
Through $E_{ij}$ variables we express $r(j,G,\sigma)$, the number of neighbors of a node $j$ installed before $j$: $r(j,G,\sigma) = \sum_{i\in N(j)}{E_{ij}}$.
Lastly, let $l_d(x)$ define the downward slopping line through the points $(d-1,f(d-1))$ and $(d,f(d))$ with $d\in \{1,2,\dots,\}$. 
Collectively those lines define a polyhedron whose extreme points coincide with the values of $f(d)$.
Thus $l_d(x) = f(d) + \left(f(d) - f(d-1)\right)(x-d)$.

NANIP is the problem
\begin{align}
\min &\sum_{i=1}^{n} c_i.\label{eq:NANIP-IP}& \\
\mbox {s.t. ~~~} & c_j \geq f(d) + \left(f(d) - f(d-1)\right)\left(\sum_{i\in N(j)}{E_{ij}}-d\right) & j\in V, d=1,2,\dots,n-1 \label{eq:NANIP-ld}\\
 & E_{ij} \geq \sum_{t=1}^T\left(X_{jt}-X_{it}\right) & T=1,2,\dots,n-1, i\in V, j \in N(i)\notag\\
 & E_{ij} + E_{ji} = 1 & i\in V, j \in N(i) \notag\\
 & \sum_{t=1}^n X_{jt} = 1, \mbox{ and }  \sum_{i=1}^n X_{it} = 1 & j\in V, t=1,2,\dots,n \notag
\end{align}

Observe that because of the convexity of $f$, the points in the intersection of the inequalities $(\ref{eq:NANIP-ld})$ 
are exactly the points above the convex hull of $\{f(d)\}_{d=1}^n$.
At an optimal $\sigma$, the value of $c_i$ would minimize $f(x)$ subject to $x\leq\sum_{i\in N(j)}{E_{ij}}$, so that
the value of $c_i = f(r(i,G,\sigma))$ .

Using CPLEX version 12.2 (IBM Corp.) we were able to solve NANIP on tree graphs with $50$ nodes in less than one hour, 
and with longer running time as the number of edges increases. 
Convergence is greatly accelerated when the basic NANIP is generalized to allow for realistic variation in the cost of a node ($f(d)$ becomes $f_i(d)$ for each 
$i \in V$).  
The speedup is likely due to the removal of symmetry in the problem, which is very high.
For instance, in any graph on $2$ or more nodes and any $\sigma$, swapping the first and second nodes in $\sigma$ does not change its cost.

\subsection{Dynamic Programming Algorithm\label{sub:DP}}
Consider a generalization of NANIP which we call the Subgraph-Aided Network Installation Problem (SANIP).
SANIP allows the cost  $r(v_t, G, \sigma)$ of a node at time $t$ to be an arbitrary 
function of the subgraph induced by the first $t-1$ nodes of $\sigma$, rather than just the neighbors of the node. 
This cost may even depend upon the installation time because it is the number of nodes $|V(H)|$. 

SANIP instances can be solved by the following Dynamic Programming Algorithm, Algorithm~\ref{al:DP}, 
which exploits the fact that the cost of any node $u$
depends only upon $H$ and not how $H$ was installed.
Instead of considering the space of permutations on $n$ nodes, which has size $n!$, it needs to consider
merely the set of subsets of $n$ nodes, which grows as $2^n$. 
Concretely, to find an optimal installation of a subgraph on nodes $T\subset V$,
it considers every decomposition of the form $T=S\cup\left\{ u\right\}$.
The algorithm maintains a table $TOPT$ indexed by subsets $S\subset V$, 
and stores a least-cost sequence for installing $S$ along with its cost.
The maximum size of the $TOPT$ table is ${n \choose n/2}$.
In the pseudocode, the symbol $\vee$ on sequences denotes concatenation.

\begin{algorithm}[ht]\caption{Dynamic Programming Algorithm for Construction of $G$}     
\label{al:DP} 
\begin{algorithmic}
\STATE $TOPT[\{u\}]\leftarrow (u)$ for all $u\in V$.
\FORALL{$t \in \{2\dots n$\}}
    \STATE $TOPT' \leftarrow \varnothing$
    \FORALL{$S \in TOPT$}
        \FORALL{$u \in V\setminus S$}
            \STATE $T \leftarrow S\cup\left\{ u\right\}$
            \IF {$T\notin TOPT'$ {\bf or} $cost(TOPT[S])+c(u,S) < cost(TOPT'[T])$}
                \STATE $TOPT'[T] \leftarrow TOPT[S] \vee (u)$
            \ENDIF
        \ENDFOR 
    \ENDFOR 
    \STATE free $TOPT$
    \STATE $TOPT \leftarrow TOPT'$
\ENDFOR 
\STATE \textbf{Output} $TOPT[V]$
\end{algorithmic} 
\end{algorithm}

The algorithm uses $\Theta(2^{n/2})$ space (the table size peaks at iteration $n/2$ and then declines) and $\Theta(n2^{n})$
time. We found that on a desktop computer the computation is tractable up to $n\approx25$. 
While the space requirement might be improved, the running time cannot be improved beyond $\Omega(n2^{n})$.
The lower bound occurs because it is possible to have an instance where the cost of installing some node $u$ 
is arbitrarily small if $u$ is sequenced immediately after some subgraph $H$ of $G$. 
Observe also that the specification of SANIP would in general use $\Theta(n2^{n})$ space.
To solve instances of larger size this algorithm could be replaced by a dynamical programming algorithm with a finite horizon.

\subsection{A Linear-Time Heuristic\label{sub:Greedy}}
Exponential time algorithms can be avoided if suboptimal solutions are acceptable, which is often the case in practice.
We will show that a simple greedy algorithm runs in linear time and we will show results of simulations comparing the results of the greedy algorithm with those of Algorithm~\ref{al:DP}.

Algorithm~\ref{al:greedy} below preferentially installs nodes that have the lowest installation cost,
which is to say, nodes that have the most installed neighbors.
The cost of the solution found by this heuristic can be bounded when $f$ is decreasing convex, as follows. In the worst case, the first node has cost $f(0)$, and all other nodes are constructed at cost $f(1)$. Often the bound can be made tighter, as follows.

\begin{algorithm}[ht]\caption{Greedy Algorithm for NANIP}     
\label{al:greedy} 
\begin{algorithmic} 
\STATE $\sigma_{\textrm{aux}}\leftarrow\varnothing$ 
\STATE \textbf{If} {n=0} \textbf{Output} $\sigma_{\textrm{aux}}$ 
\STATE $\sigma_{\textrm{aux}}\leftarrow $ Random $u \in V$ 
\WHILE{$|\sigma_{\textrm{aux}}| < n$}     
\STATE $u\in \argmax_{u\in V\setminus \sigma_{\textrm{aux}}} \left\{f(r(u, G, \sigma_{\textrm{aux}})) : u\in V\setminus \sigma_{\textrm{aux}}\right\}$, resolving ties arbitrarily.
\STATE $\sigma_{\textrm{aux}}\leftarrow \sigma_{\textrm{aux}} \vee u$.
\ENDWHILE 
\STATE \textbf{Output} $\sigma_{\textrm{aux}}$
\end{algorithmic} 
\end{algorithm}

\begin{prop}
Let $f$ be decreasing convex and $G$ a graph with node degrees $d_{1}\le d_{2}\leq\dots\le d_{n}$ and $n\geq 3$. The cost of the solution $\sigma$ found by Algorithm~\ref{al:greedy} is bounded as
\label{thm:breadth}
\begin{equation}
C_G(\sigma) \leq f(0)+s f(1)+f(q)+f(d_{s+3})+\dots+f(d_n)\,,\label{eq:Breadth-bd}
\end{equation}
where $s\in \{0,1,\dots,n-1\}$ is the largest possible, subject to the constraints $(i)$ $q\in \{1,2,\dots,d_{s+2}\}$ and $(ii)$ $s + q + d_{s+3}+\dots+d_n = m$. 
\end{prop}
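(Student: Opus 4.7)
The plan is to prove the bound in two stages: first establish constraints satisfied by the $r$-values produced by any run of Algorithm~\ref{al:greedy}, then upper bound $\sum_{t=1}^n f(r_t)$ over all integer assignments satisfying those constraints by a convexity argument analogous to the proof of Theorem~\ref{thm:wf.bound}.

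Let $\sigma=(u_1,\dots,u_n)$ be the sequence produced by the greedy, and write $r_t := r(u_t,G,\sigma)$. I would show these values satisfy: (a) $r_1=0$, by initialization; (b) $r_t\geq 1$ for $t\geq 2$; (c) $r_t\leq d(u_t)$, trivially; and (d) $\sum_t r_t = m$, by Lemma~\ref{lem:edge-decomp}. The only nontrivial property is (b), which I would prove by induction on $t$: the installed subgraph stays connected throughout. Indeed, since $G$ is connected and fewer than $n$ nodes are installed at step $t\geq 2$, the frontier of the installed subgraph is nonempty and contains at least one node with $r\geq 1$; because greedy selects the node maximizing $r$ (equivalently, minimizing the decreasing $f\circ r$), the chosen node is necessarily a frontier node, so the installed subgraph remains connected and $r_t\geq 1$.

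With (a)--(d) in hand, I would upper bound $\sum_t f(r_t)$ by maximizing it over all integer sequences $(r_t)$ satisfying (a)--(d), where the pairing of $r$-values with node degrees is also free (equivalently, the permutation is unconstrained). By the pairwise swap identity~\eqref{eq:cvx2}, whenever two indices $i,j\geq 2$ admit $r_i>1$ and $r_j<d(u_j)$ with $r_i\leq r_j$, the transform $r_i\mapsto r_i-1$, $r_j\mapsto r_j+1$ preserves all constraints and weakly increases $\sum_t f(r_t)$ by convexity. Iterating, the maximum is attained at a configuration in which at most one $r_t$ lies strictly between $1$ and $d(u_t)$, and every other nonzero $r_t$ equals either $1$ or $d(u_t)$. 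A second exchange argument shows that at the maximum the $s$ lowest-degree nodes (apart from the first-installed one) are assigned $r=1$ and the $n-s-2$ highest-degree nodes are assigned $r=d(u_t)$: swapping a high-degree ``floor'' node with a low-degree ``ceiling'' node (and rebalancing via the interior value) weakly increases $\sum_t f(r_t)$ by convexity of $f$ combined with monotonicity of the degree sequence. The sum constraint then forces $s+q+\sum_{i=s+3}^n d_i=m$ with $q\in\{1,\dots,d_{s+2}\}$, and maximality of $s$ pins down the configuration in~\eqref{eq:Breadth-bd}.

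The main obstacle is the second exchange argument: one must verify that swapping the degree-labels of a floor and a ceiling node, together with the compensating integer update to the single interior value, remains feasible and does not decrease the objective. Handling the edge cases where the interior value would leave $\{1,\dots,d_{s+2}\}$, where no interior value is needed, and where the first-installed node has a specific degree, is exactly what fixes $s$ and $q$ to the values claimed in the proposition.
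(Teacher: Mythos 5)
The paper does not actually write out a proof of Proposition~\ref{thm:breadth} --- it only remarks that the argument is ``elementary, using techniques similar to the proof of Theorem~\ref{thm:wf.bound}'' --- and your reconstruction is exactly the intended route: observe that the greedy keeps the installed set connected so that $r_1=0$, $r_t\geq 1$ for $t\geq 2$, $r_t\leq d(u_t)$ and $\sum_t r_t=m$ by Lemma~\ref{lem:edge-decomp}, then maximize $\sum_t f(r_t)$ over all integer vectors meeting these constraints via the pairwise exchange \eqref{eq:cvx2}. Two small points deserve care. First, your step (b) relies on reading Algorithm~\ref{al:greedy} as selecting $\argmax_u r(u,G,\sigma_{\textrm{aux}})$, i.e.\ $\argmin_u f(r(u,\cdot))$, as the surrounding text describes; the pseudocode's literal $\argmax f(\cdot)$ would pick nodes with the \emph{fewest} installed neighbors and property (b), hence the bound, would fail --- so it is worth stating explicitly that you are using the intended reading. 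Second, in your ``second exchange'' the direct swap of a low-degree ceiling node $v$ ($r_v=d_v$) with a higher-degree floor node $w$ ($r_w=1$) is in fact objective-\emph{neutral} (set $r_v:=1$, $r_w:=d_v$; the multiset of $r$-values is unchanged and feasibility is preserved since $d_v\leq d_w$); the weak increase comes only from then re-applying the first exchange to push $r_w$ up to its new, larger ceiling. Phrased that way the obstacle you flag dissolves, and the rest --- the sum constraint forcing $s+q+\sum_{i=s+3}^n d_i=m$ with $q\leq d_{s+2}$, and maximality of $s$ serving only to normalize the representation --- goes through as you describe.
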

\noindent 
In (\ref{eq:Breadth-bd}), the terms after $s f(1)$ occur only if $d_n > 1$.

The proof, using techniques similar to the proof of Theorem~\ref{thm:wf.bound}, is elementary. 
Given that we can bound from below the cost of the optimal solution (Corollary~\ref{cor:jensen}), one can even compute an optimality gap for any instance of NANIP.

The practical performance of Algorithm~\ref{al:greedy} is illustrated by simulations in Fig.~\ref{fig:performance}.
Algorithm~\ref{al:greedy} is within $5\%$ of the optimum in our simulations in networks of low, medium and high edge densities.
Therefore, and because it runs in linear time, Algorithm~\ref{al:greedy} might be appropriate to large-scale practical instances of NANIP.
Also shown in Fig.~\ref{fig:performance} are the degree upper bound and the relaxation bound.
The simulations take a random connected graph on $15$ nodes and $m$ edges and
the convex function $f(i)=\frac{1}{1+i}$ for $i\in\mathbb{N}_0$ (the results are qualitatively similar for dozens of convex cost functions we examined.)
For each value of $m$, $5$ random connected graphs were generated by taking a random tree on $n$ nodes and adding $m-(n-1)$ edges to it at random.
The greedy algorithm was scored based on the average over $10$ runs on each graph.

\begin{figure}[tbh]
\begin{centering}
\includegraphics[width=0.6\textwidth]{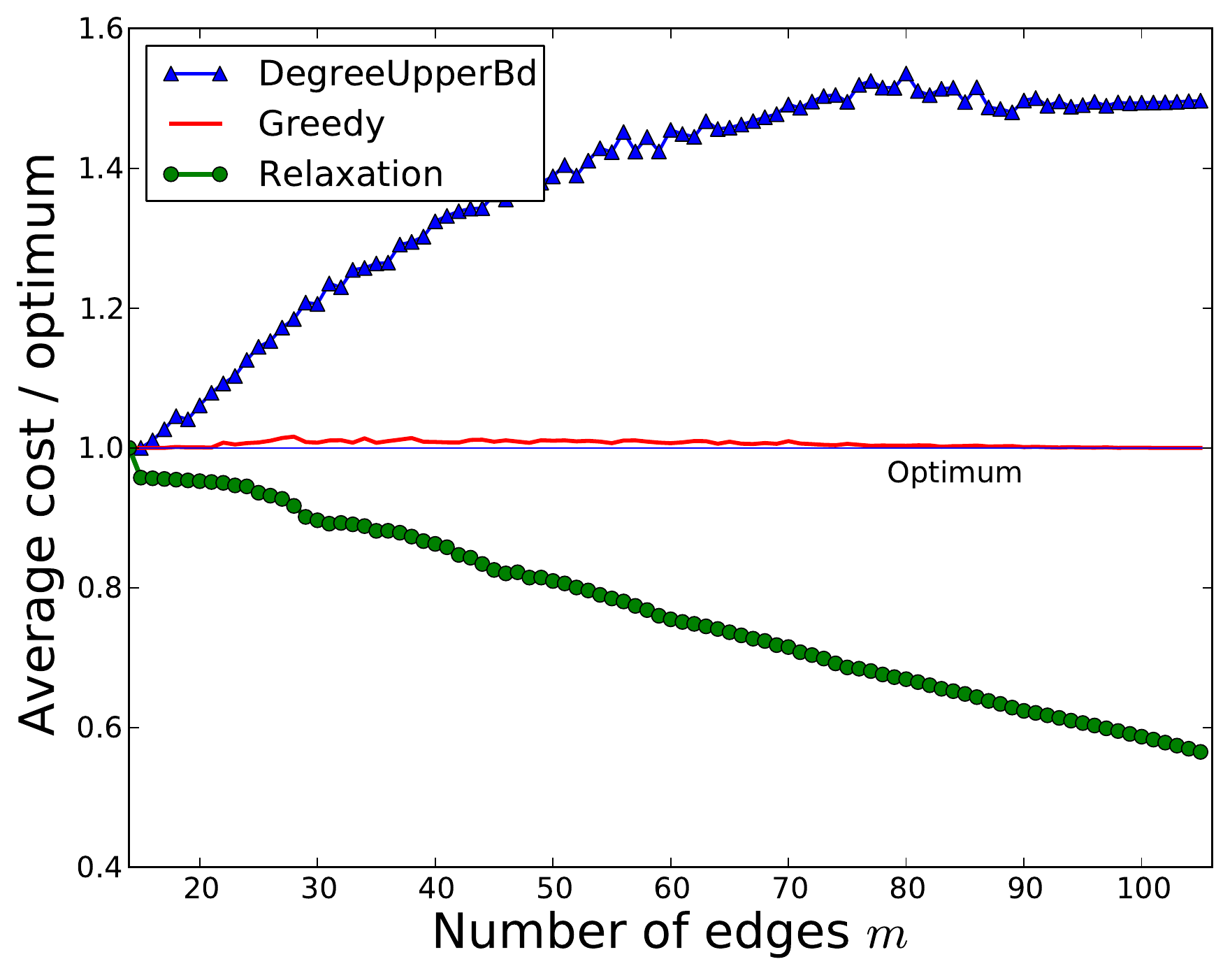} 
\par
\end{centering}
\caption{The performance of bounds and of Algorithm~\ref{al:greedy} (Greedy) as a function of the number of edges, $m$.
The greedy heuristic usually finds solutions indistinguishable from the optimum.
The degree upper bound is from (\ref{eq:Breadth-bd}) and the relaxation bound is from (\ref{eq:wf.bound}).
\label{fig:performance}}
\end{figure}

In addition to this greedy heuristic, we have examined a heuristic which installs nodes based on their degree
starting with the highest degree nodes.
In our simulations, this heuristic did not outperform the cost-greedy heuristic (Alg.~\ref{al:greedy})
even when the cost function $f$ was decreasing concave (detailed results are not shown).
Thus, Alg.~\ref{al:greedy} seems to be appropriate for the case of a decreasing concave $f$.

\section{Conclusion} \label{sec:concl}
This paper introduces a new discrete optimization problem, the Neighbor-Aided Network Installation Problem (NANIP).
We found that NANIP and its generalization could be solved using integer and dynamic programming,
while a fast greedy approach exists for the case of convex decreasing $f$.
We therefore find that recovery operations of simple infrastructure networks
could be planned by a simple rule: every step of the recovery should focus on the most accessible of the damaged network nodes.

\section*{Acknowledgments}
We thank Constantine Caramanis, Leonid Gurvits, Jason Johnson, Joel Lewis and Joel Nishimura for insightful conversations. 
AG was funded by the Department of Energy at the Los Alamos National Laboratory
under contract DE-AC52-06NA25396 through the Laboratory Directed Research
and Development program, and by the Defense Threat Reduction Agency
grant {}``Robust Network Interdiction Under Uncertainty''. 
MB is supported in part by NIST grant 60NANB10D128. Part of this work was done at Los Alamos.
We thank Feng Pan and Aric Hagberg for the encouragement,
and two anonymous reviewers for their constructive criticism.

\bibliographystyle{plain}
\bibliography{../setseq}

\end{document}